\documentclass{amsart}

\usepackage{geometry,graphicx,amssymb,amsmath,amsbsy,eucal,amsfonts,mathrsfs,amscd,bm,tcolorbox, subcaption, enumitem, tikz-cd}
\usepackage[hidelinks]{hyperref}
\usepackage[all]{xy}

\numberwithin{equation}{section}
\allowdisplaybreaks[1]

\newtheorem{theorem}{Theorem}[section]
\newtheorem{lemma}[theorem]{Lemma}

\newtheorem{proposition}[theorem]{Proposition}
\theoremstyle{definition}
\newtheorem{definition}[theorem]{Definition}
\theoremstyle{remark}

\newtcolorbox{strangbox}{
  title=\textbf{Strang's Conjecture (without singular vertices)}
}
\newtcolorbox{strangboxx}{
  title=\textbf{Strang's Conjecture (with singular vertices)}
}
\usepackage{mydef}

\newcommand{\sk}{S_h^k}
\newcommand{\vk}{\mathbf{V}_h^{k-1}}
\newcommand{\vone}{\mathbf{V}_h^{1}}
\newcommand{\dzero}{D_h^{0}}
\newcommand{\dk}{D_h^{k-2}}
\newcommand{\qk}{Q_h^{k-2}}
\newcommand{\qkt}{[Q_h^{k-2}]^\bot}
\newcommand{\Nov}{\mathring{N_v}}
\newcommand{\Noe}{\mathring{N_e}}

\begin{document}
\title[]{Strang's conjecture: Positive result on Strong Collapsible complexes and a code to check the conjecture}
\author{Aranzazu Romero}%
 \address{Mathematics, California State University-Long Beach, Long Beach, CA}%
 \email{aranzazu.romero01@student.csulb.edu}%
 \author{Renaldi-Bradley Bodombo }%
 \address{Mathematics, University of Maryland-College Park , College-Park, MD}%
\email{rbodombo@terpmail.umd.edu}%
% \author{\\ \textbf{Advisors: }}
% \author{Johnny Guzm\`an}
% \author{Maurice Fabien}
% \author{Elizabeth Rubio}
% \author{Parneet Gill}

 \makeatletter
\@namedef{subjclassname@2020}{\textup{2020} Mathematics Subject Classification}
\makeatother
\subjclass[2020]{
%65N55;   %%  Multigrid methods; domain decomposition for boundary value problems involving PDEs;
%65F10;   %% Iterative numerical methods for linear systems
65N30;   %%  Finite element, Rayleigh-Ritz and Galerkin methods for boundary value problems involving PDEs;
58J10;   %%  Differential complexes [See also 35Nxx]; elliptic complexes
% 65N12;   %%  Stability and convergence of numerical methods for boundary value problems involving PDEs;
% 65N22;   %%  Numerical solution of discretized equations for boundary value problems involving PDEs;
% 65N15;   %%  Error bounds for boundary value problems involving PDEs
% 15A69;   %%  Multilinear algebra, tensor calculus
% 15A72;   %%  Vector and tensor algebra, theory of invariants [See also 13A50, 14L24]
% 74S05;   %%  Finite element methods applied to problems in solid mechanics
}

%\date{\today}
%\textbf{Advisors:} Johnny Guzm\`an, Maurice Fabien, Elizabeth Rubio, Parneet Gill 
\begin{abstract}
Splines, which are piecewise polynomial functions with given smoothness, are used in numerous applications such as computer aided geometric design, algebraic geometry and for implementation of the finite element method. In many applications, it is important to know the dimension of the space of splines over a domain, but this continues to be an open problem. Strang's conjecture predicts the dimension of the space of bivariate $C^1$ splines of degree at most k over a polygonal domain with no holes. For the cases,  $k=4$ and $k \geq 5$ the conjecture has been proven true. For the case $k=2$, John Morgan and L. Ridgway Scott provided a counterexample. In this project, we begin by relating the space of bivariate $C^1$ splines of degree at most $k$ to various other function spaces. With the aid of some intrinsic operators on these spaces, we obtain a complex that yields an equivalence statement to Strang’s conjecture. The equivalent statement admits a computational approach to the conjecture and thus we present an NGSolve code to check Strang’s conjecture for a given mesh. In addition, we prove that if the simplicial complex associated to a triangulation is strongly collapsible (with a non-collinear condition), then Strang's conjecture holds for $k=2$.\\ 

\textbf{Advisors: Johnny Guzm\'an, Maurice Fabien, Elizabeth Rubio, and Parneet Gill} 
\end{abstract}

\maketitle
% \tableofcontents

\section{Introduction}

Let $\Omega \subset \mathbb{R}^2 $ be a connected two dimensional polygonal region with no holes. Let $\Th$ be a triangulation of $\Omega$ and define the space $S_h^k$ to be the space of continuously differentiable bivariate splines of order $k$ on the triangulation. 
\begin{align*}
    S_h^k(\Th)= \{ v \in C^1(\Omega): v|_T \in P^k(T), \forall T \in \Th \}.
\end{align*}
where $P^k(T)$ is the space of all bivariate polynomials of order at most $k$ on a triangle $T$ of a triangulation.
We will consider the following number associated to the triangulation $\Th$ which we will refer to as Strang's Number: 
\begin{equation}\label{eq:strangdim}
    \mathcal{S}_h^k= \binom{k+2}{2}N_T-(2k+1)\mathring{N_e} +3\mathring{N_v}  
\end{equation}
where $N_T, N_e, N_v$ are the number of triangles, edges and vertices of the triangulation, respectively. The number of interior edges and interior vertices are denoted by $\mathring{N}_e, \mathring{N}_v$, respectively.\\
\\
\noindent It was conjectured by Strang \cite{strang1973piecewise} that the dimension of $\sk(\Th)$ is given by \eqref{eq:strangdim}.

\begin{strangbox}

    \begin{equation*}
     \dim S_h^k(\Th)= \mathcal{S}_h^k 
     %\binom{k+2}{2}N_T-(2k+1)\mathring{N_e} +3\mathring{N_v}   
    \end{equation*}
\end{strangbox}

\noindent It was later shown that one should alter the right-hand side to take into account singular vertices (see Figure \ref{fig:singulavert}). These are vertices on the mesh that have four edges emanating from them such that these four edges lie on two lines. 

\begin{figure}[htbp]
    \centering
    \begin{subfigure}[b]{0.45\textwidth}
        %\centering
        \begin{tikzpicture}[scale=0.9, transform shape]
            \node[circle, draw, fill=black, inner sep = 0pt] (A) at (0,4){};
            \node[circle, draw, fill=black, inner sep = 0pt] (B) at (4,0){};
            \node[circle, draw, fill=black, inner sep = 0pt] (C) at (-2,-2){};
            \node[circle, draw, fill=black, inner sep = 0pt] (D) at (-2,3.5){};
            \node[circle, draw, fill=black, inner sep = 0pt] (E) at (-2,6){};
            \node[circle, draw, fill=black, inner sep = 0pt] (G) at (6,6){};
            \node[circle, draw, fill=red, label=above:{$x_3$}, inner sep = 0pt] (S) at (2,2){};
        
            %Remove all the edges necessary to satisfy the definition of Beta(E,K_2) Using previous complex
            
            \draw (A) -- (D);
            \draw (A) -- (C);
            \draw (D) -- (C);

            \draw (C) -- (S);
            \draw (A) -- (S);

            \draw (C) -- (B);
            \draw (B) -- (S);

            \draw (A) -- (G);

            \draw (G) -- (S);

            \draw (G) -- (B);

            \draw (A) -- (E);
            \draw (E) -- (D);
            
        \end{tikzpicture}
        
    \end{subfigure}
    \caption{Example of a singular vertex, $x_3$.}
    \label{fig:singulavert}
\end{figure}
The conjecture should read
\begin{strangboxx}
        \begin{equation*}
       \dim\mathcal{S}_h^k= \binom{k+2}{2}N_T-(2k+1)\mathring{N_e} +3\mathring{N_v} + \sigma  
    \end{equation*}
\end{strangboxx}
where $\sigma$ represents the number of singular vertices\\\\

Scott and Morgan \cite{morgan1975nodal} showed that the conjecture is true for splines with polynomial order $k \ge 5$. Alfeld et al. \cite{alfeld1987explicit} showed the conjecture is true in the case of $k=4$. In the case $k=2$, Scott and Morgan provided a counter-example. The case $k=3$ remains open. It should also be mentioned that for generic meshes Billera \cite{billera1988homology} proved that Strang's conjecture holds. 

The goal of this research project is two fold. First we write a code to check if for a given mesh Strang's conjecture holds. Our approach is to (as has done before) exploit that the space of $C^1$ splines fit into a differential complex. Then checking Strang's conjecture boils down to building a matrix and checking the dimension of the null space. Our code is written using NGSolve. %removed the citation. 

The second goal is to prove that Strang's conjecture holds for $C^1$ piecewise quadratics ($k=2$) on strongly collapsible simplicial complexes (with a non-collinear condition that we explain below).   The Morgan-Scott mesh  \cite{morgan1990dimension} that is the counter example for Strang's conjecture for $k=2$ is collapsible to a point, but not strongly collapsible. Hence, this prompted us to see if we can prove a positive result for strongly collapsible meshes. We were able to do this but with an additional assumption. 

In this paper, Section \ref{sec:notation} defines some required notations and spaces to set up the problem and Section \ref{sec:diff_complex} introduces the known equivalence statement of the conjecture. With the context of the problem established, we present a code in Section \ref{sec:NGSolve_Code} that permits a computational approach for checking the validity of the conjecture on a given mesh. The methods of the code are explained here and the results are verified on known cases of the conjecture. Finally, in Section \ref{sec:strong_collapsing}, we introduce a class of strongly collapsing meshes in order present a proof of Strang's conjecture for a subset of the case $k=2$.

In this paper, Section \ref{sec:notation} defines some required notations and spaces to set up the problem and Section \ref{sec:diff_complex} introduces the known equivalence statement of the conjecture. With the context of the problem established, we present a code in Section \ref{sec:NGSolve_Code} that permits a computational approach for checking the validity of the conjecture on a given mesh. The methods of the code are explained here and the results are verified on known cases of the conjecture. Finally, in Section \ref{sec:strong_collapsing}, we introduce a class of strongly collapsing meshes in order present a proof of Strang's conjecture for a subset of the case $k=2$. 

% % % % % % %
% % % % % % %
% % % % % % %

\section{Preliminaries}\label{sec:notation}
We begin by defining the spaces that we work with and their dimensions.

Recall, $P^k(T)$ is the space of all bivariate polynomials of order at most $k$ on a triangle $T$ of a triangulation, and the dimension of $P^k(T)$ is commonly known to be 
\begin{equation*}
    \text{dim}(P^k(T))= \frac{(k+1)(k+2)}{2}
\end{equation*}
The space of piecewise continuous bivariate polynomials of degree at most $k-1$ is denoted
\begin{equation*}
    \vk:= \{v \in C(\overline{\Omega}) : v|_T \in P^{k-1}(T)\ \forall\ T \in \tau  _h\}.
\end{equation*}
The space of discontinuous bivariate polynomials of degree at most $k-2$ is denoted
\begin{equation*}
    \dk := \{ v : v|_T \in P^{k-2}(T) \quad \forall \quad T \in \tau _n \}. 
\end{equation*} 
The dimensions of these spaces are well known \cite{brenner2008mathematical} (and can easily be shown) to be 
\begin{subequations}
    \begin{alignat}{1}
    \dim \vk=& N_v + (k - 2)N_e + \frac{(k-3)(k-2)}{2}N_T\label{dimv}\\
    \dim \dk=&  \frac{k(k-1)}{2}N_T\label{dimd}
    \end{alignat}
\end{subequations}
    
% The space of most concern is \sk which is the space of all $C^1$ continuous piecewise splines over a domain $\Omega$. We define this space to be
% \begin{equation*}
%     \sk = \{ v \in C^1(\overline{\Omega}) : v|_t \}
% \end{equation*}
For brevity, we will use $\sk$ for   $\sk(\Th)$,  $V_h^{k-1}$  for $V_h^{k-1}(\Th)$, $\dk$  for $\dk(\Th)$, and $\vk$ for $\left[V_h^{k-1}\left(\Th\right)\right]^2$
With the information presented here and in the introduction, we can proceed. 

\section{A differential complex associated to $S_h^k(\Th)$}\label{sec:diff_complex}

% Moved to preliminaries

    %We have to consider two other spaces to do so
    
    % \begin{alignat}{1}
    % V_h^{k-1}(\Th) := & \{v\in C\left(\overline{\Omega}\right):v|_T\in P^{k-1}\left(T\right),\forall \text{ T}\in \Th \}\\
    % D_h^{k-2}(\Th) := & \{v:v|_T \in P^{k-2}\left(T\right),\forall\text{ T}\in\Th\}
    % \end{alignat}

    % Where $V_h^{k-1}\left(\Th\right)$ is the space of piecewise continuous bivariate polynomials of degree at most $k-1$, and $D_h^{k-2}$ is the space of discontinuous bivariate polynomials of degree at most $k-2$. For brevity, we will use $\sk$ for   $\sk(\Th)$,  $V_h^{k-1}$  for $V_h^{k-1}(\Th)$, $\dk$  for $\dk(\Th)$, and $\vk$ for $\left[V_h^{k-1}\left(\Th\right)\right]^2$
    
    % It is well known \cite{} (can easily be shown) that the dimensions of these spaces are given by the following. 
    % \begin{subequations}
    % \begin{alignat}{1}
    % \dim \vk=& N_v + (k - 2)N_e + \frac{(k-3)(k-2)}{2}N_T\label{dimv}\\
    % \dim \dk=&  \frac{k(k-1)}{2}N_T\label{dimd}
    % \end{alignat}
    % \end{subequations}

It has been known  that the space of splines $S_h^k(\Th)$, $\dk$, $\vk$ fit into a differential complex.
We can now write the complex as follows. \\
\begin{equation}
    S_h^k\xrightarrow{\quad\text{rot}\quad} \vk\xrightarrow{\quad\text{div}\quad} D^{k-2}_h.
\end{equation}

Where $\text{rot}$ is the rotational gradient and for a function $s$ is given by
\begin{equation}
    \rot s = \left[ {\begin{array}{c}
\partial_2 s \\ 
-\partial_1 s
\end{array} } \right] 
\end{equation}
where $\partial_j$ is the partial with respect to the $j$the variable. 
% And $\text{div}$ is the divergence operator, acting on vector valued functions in $\left[V_h^{k-1}\left(\Th\right)\right]^2$. If $\vec{v}\in\left[V_h^{k-1}\left(\Th\right)\right]^2$ then, we can write $\vec{v}$ as $\vec{v}(x_1,x_2)=\big(v_1(x_1,x_2),v_2(x_1,x_2)\big),\quad v_1,v_2 \in V_h^{k-1}\left(\Th\right)$. 

For a vector valued function $\vec{v}$  the divergence of $\vec{v}$ is given by 
\begin{equation}
\text{div } \vec{v}=\partial_{1}v_1+\partial_{2}v_2.
\end{equation}

%Show the discrete de Rham sequence given is a complex.
We will use the following result. 
\begin{proposition}
    Range$(\rot, \sk) = \text{ker}(\dive, \vk)$.
\end{proposition}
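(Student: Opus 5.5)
We prove the two inclusions separately. Throughout we use that $\Omega$ is connected and simply connected, since it is a polygon with no holes.

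For $\mathrm{Range}(\rot,\sk)\subseteq\ker(\dive,\vk)$, take $s\in\sk$. On each triangle $T$, $s|_T\in P^k(T)$, so both components of $\rot s$ restricted to $T$ lie in $P^{k-1}(T)$. Because $s\in C^1(\Omega)$, the partial derivatives $\partial_1 s,\partial_2 s$ are continuous on $\overline\Omega$. Hence $\rot s\in\vk$. On each $T$ we have
\[
\dive\rot s=\partial_1\partial_2 s-\partial_2\partial_1 s=0,
\]
so $\rot s$ lies in the kernel. Here the divergence of an element of $\vk$ is understood elementwise, which is consistent with the target space $\dk$.

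For the reverse inclusion, take $\vec v=(v_1,v_2)\in\vk$ with $\dive\vec v=0$ on every triangle. Consider the rotated field $\vec w=(-v_2,v_1)$, which is continuous and piecewise polynomial. On each $T$,
\[
\partial_1 w_2-\partial_2 w_1=\partial_1 v_1+\partial_2 v_2=0.
\]
The field $\vec w$ is continuous and hence in $H^1$. Its elementwise curl therefore coincides with its distributional curl, and $\vec w$ is curl-free in the distributional sense on $\Omega$. Since $\Omega$ is simply connected, there is a potential $s\in H^1(\Omega)$, unique up to a constant, with $\nabla s=\vec w$. Concretely, fix $x_0\in\Omega$ and set $s(x)=\int_{x_0}^x\vec w\cdot d\ell$; path independence follows from Green's theorem applied to the piecewise smooth, continuous, curl-free field. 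Then $\partial_1 s=-v_2$ and $\partial_2 s=v_1$, so $\rot s=(\partial_2 s,-\partial_1 s)=(v_1,v_2)=\vec v$.

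It remains to check that $s\in\sk$. On each $T$, $\nabla s|_T\in[P^{k-1}(T)]^2$, so $s|_T$ is a polynomial of degree at most $k$; a function whose gradient is polynomial on a connected open set is polynomial. The gradient $\nabla s=\vec w$ is continuous on $\overline\Omega$. The function $s$ itself is continuous, since it is defined by line integrals of a bounded field. Together these give $s\in C^1(\overline\Omega)$, and therefore $s\in\sk$.

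The main obstacle is justifying path independence, or equivalently the global existence of $s$, when $\vec w$ is only piecewise smooth. The cleanest way to handle it is to note that $\vec w\in[H^1(\Omega)]^2$, so the elementwise identity $\mathrm{curl}\,\vec w=0$ holds distributionally: the interelement jump terms vanish because $\vec w$ is continuous across edges. One then applies the standard result that curl-free $L^2$ fields on simply connected Lipschitz domains are gradients. This is exactly the step where the no-holes assumption on $\Omega$ is used.
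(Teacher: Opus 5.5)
Your proof is correct and follows essentially the same route as the paper's. The forward inclusion is a direct computation. For the reverse inclusion you take an $H^1(\Omega)$ potential $s$ with $\rot s=\vec v$ on the simply connected domain, then observe that $s|_T\in P^k(T)$, that $s$ is continuous, and that its gradient is continuous, so $s\in\sk$. The only difference is that you justify the existence of the potential by rotating to the curl-free field $\vec w$ and noting that continuity removes the interelement jump terms, whereas the paper simply cites this as well known.
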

\begin{proof}
    It follows from the definitions of the rotational gradient and divergence operators that $\dive(\rot s) = 0 $ for all $s \in \sk$.  Moreover, clearly the $\rot$ maps a function in $\sk$ to a function in $\vk$.  Hence, Range$(\rot, \sk)  \subset \text{ker}(\dive, \vk)$.\\

    To prove the reverse inclusion we let $v \in  \text{Ker }(\dive, \vk)$ and note that it is well known that there exists a $s \in H^1(\Omega)$  such that $\rot  s =v$. However, we need to argue that $s \in \sk$.  On each triangle $T \in \Th$ we have that $\rot s=v$ and since the entries of $v$ are in $P^{k-1}(T)$  it can be shown that $s|_T \in P^k(T)$. Since $s \in H^1(\Omega)$  this implies that $s$ is continuous.  Using that $\rot s=v$ is continuous we have have that $s$ is $C^1$ which implies that $s \in \sk$. 
\end{proof}

We can now prove an equivalent statement for Strang's conjecture. To do so, we use the following lemma.

\begin{lemma} Assume the mesh/triangulation $\Th$ has no singular vertices then  following relationship holds 
\begin{equation}
  \mathcal{S}_h^k=\dim \vk - \dim \dk+1. 
\end{equation}
\label{SwithVandD}
\end{lemma}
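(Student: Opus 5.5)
The plan is a direct counting argument. Lemma \ref{SwithVandD} is an identity between integers built from $N_T$, $N_e$, $N_v$ and their interior counterparts, so I will expand both sides with \eqref{dimv} and \eqref{dimd} and reduce their difference to zero using two standard combinatorial relations of a triangulation. I should be careful that $\vk$ here denotes the vector space $[V_h^{k-1}]^2$. Its dimension is therefore twice the right-hand side of \eqref{dimv}:
\begin{equation*}
\dim \vk - \dim \dk + 1 = 2N_v + 2(k-2)N_e + (k-3)(k-2)N_T - \frac{k(k-1)}{2}N_T + 1 .
\end{equation*}

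The two relations I will use are the following. Let $N_b$ denote the number of boundary edges.
\begin{itemize}
\item Since $\Omega$ is connected with no holes, the boundary is a single closed polygon. Hence it has as many boundary vertices as boundary edges, so $N_v = \Nov + N_b$ and $N_e = \Noe + N_b$.
\item Counting edge--triangle incidences gives $3N_T = 2\Noe + N_b$.
\item Euler's formula for a simply connected planar triangulation gives $N_v - N_e + N_T = 1$. By the first relation this is equivalent to $\Nov - \Noe + N_T = 1$.
\end{itemize}

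Next I subtract the expression above from $\mathcal{S}_h^k$ in \eqref{eq:strangdim}. The coefficient of $N_T$ is
\begin{equation*}
\frac{(k+2)(k+1)}{2} - (k-3)(k-2) + \frac{k(k-1)}{2} = 6k-5 .
\end{equation*}
After substituting $N_e = \Noe + N_b$ and $N_v = \Nov + N_b$, the difference becomes
\begin{equation*}
(6k-5)N_T - (4k-3)\Noe - (2k-2)N_b + \Nov - 1 .
\end{equation*}
Eliminating $N_b = 3N_T - 2\Noe$, all $k$-dependence cancels. What remains is $N_T - \Noe + \Nov - 1$, which vanishes by the interior form of Euler's formula.

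There is no real obstacle; the only points needing care are bookkeeping. The first is remembering the factor $2$ from the vector-valued space. The second is justifying $N_{v,\partial} = N_b$ and Euler characteristic $1$, which is exactly where the hypothesis that $\Omega$ is connected without holes enters. The assumption of no singular vertices is not used in the counting itself. It only matters later, when this identity is combined with the complex to compare with $\dim \sk$.
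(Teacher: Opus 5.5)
Your proposal is correct and takes essentially the same route as the paper. You expand both sides via \eqref{dimv}--\eqref{dimd}, including the factor $2$ for the vector-valued space; the paper's computation uses that factor too, even though its first display for $\mathcal{D}_h^k$ omits it. You then reduce the difference to zero using the same identities $N_v^\partial=N_e^\partial$, $2\Noe+N_e^\partial=3N_T$ and $\Nov-\Noe+N_T=1$. Your bookkeeping checks out: the $N_T$-coefficient is $6k-5$, and the difference reduces to $N_T-\Noe+\Nov-1$. Your remark that the no-singular-vertex hypothesis plays no role in this purely combinatorial identity is also accurate.
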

\begin{proof}
Let $\mathcal{D}_h^k$ be the right hand side: 

\begin{equation*}
  \mathcal{D}_h^k  :=\dim \vk - \dim \dk+1.
\end{equation*}
Using \eqref{dimv} and \eqref{dimd} we have 
\begin{equation*}
    \mathcal{D}_h^k=N_v + (k-2)N_e+\frac{(k-3)(k-2)}{2}N_T - \left(\frac{k(k-1)}{2}N_T\right)+1
\end{equation*}

We will use the following identities which are well known. 
\begin{subequations}
\begin{alignat}{1}
    -1 + N_v - N_e + N_T =&0 \label{id1}\\
    N_v^{\partial}=& N_e^{\partial} \label{id2} \\
    2\mathring{N_e}+N_e^{\partial} = &3N_T \label{id3}\\
    \mathring{N_v}-\mathring{N_e}+N_T-1 = & 0 \label{id4}\\
    N_v =& \mathring{N_v}+N_v^{\partial} \label{id5}\\
    N_e =& \mathring{N_e} + N_e^{\partial} \label{id6}
\end{alignat}
\end{subequations}
Here, $N_v^\partial$(respectively, $N_e^\partial$) represents boundary vertices (respectively boundary edges.) The following is the computation that demonstrates that the desired result holds. 
\begin{alignat*}{2}
\mathcal{S}_h^k-\mathcal{D}_h^k &= \binom{k+2}{2}N_T-(2k+1)\mathring{N_e} +3\mathring{N_v}-\left(N_v + (k-2)N_e+\frac{(k-3)(k-2)}{2}N_T - \left(\frac{k(k-1)}{2}N_T\right)+1\right) \qquad &&
\end{alignat*}
\\

We first group terms with number of edges, number of vertices, and number of triangles together, and simplify the coefficient of the number of triangles. Doing so, we obtain
\begin{alignat*}{2}
\mathcal{S}_h^k\mathcal{-D}_h^k&=3N_v^{o}-2N_v-(2k+1)\Noe-2(k-2)N_e + \frac{k^2+3k +2}{2}N_T-\frac{k^2-9k+12}{2}N_T-1 \qquad && \\ 
        &=3\Nov-2N_v-(2k+1)\Noe-2(k-2)N_e+\left(\frac{12k-10}{2}\right)N_T-1 \qquad && \\
        &= 3N_v^{o}-2N_v-(2k+1)\Noe-2(k-2)N_e+(6k-5)N_T\\
\end{alignat*}

We now write out all edges (respectively vertices) in terms of boundary edges (respectively vertices) and interior edges (respectively vertices) to obtain
\begin{alignat*}{2}
        \mathcal{S}_h^k\mathcal{-D}_h^k&=3\Nov-2\left(\Nov+N_v^{\partial}\right)-(2k+1)\Noe-2(k-2)N_e+(6k-5)N_T -1\qquad &&  \text{by } \eqref{id5} \\
        &= 3\Nov-2(\Nov+N_e^{\partial}) - (2k+1)\Noe-2(k-2)N_e+(6k-5)N_T-1 \qquad && \text{by }\eqref{id2}\\
        &= \Nov-2N_e^{\partial}-(2k+1)\Noe+(4-2k)(\Noe+N_e^{\partial}) + (6k-5)N_T-1 \qquad &&  \text{by } \eqref{id6}\\
\end{alignat*}

We now distribute the terms accordingly and we recognize one of the identities by moving some terms around. Doing so, we see
\begin{alignat*}{2}
      \mathcal{S}_h^k\mathcal{-D}_h^k  &= \Nov-2N_e^{\partial}-2k\Noe-\Noe+4\Noe-2k\Noe+4N_e^{\partial}-2kN_e^{\partial}+6kN_T-5N_T-1 \qquad &&  \\
        &= \Nov-\Noe-1 -5N_T -2N_e^{\partial}-2k\Noe+4N_e^{\partial}-2kN_e^{\partial}+4\Noe - 2k\Noe+6kN_T \qquad &&  \\
        &= \left(\Nov-\Noe-1+N_T\right) -6N_T - 2N_e^{\partial} - 2k\Noe + 4N_e^{\partial} - 2kN_e^{\partial} + 4\Noe - 2kN_e^{\partial}+6kN_T \qquad &&  \\
        &= (0) -6N_T - 2N_e^{\partial} - 2k\Noe+4N_e^{\partial}-2kN_e^{\partial}+4\Noe-2k\Noe+6kN_T \qquad &&   \text{by }\eqref{id4}\\\\
\end{alignat*}

Now, we group like terms and use some identities to substitute all boundary edges. We obtain the following:
\begin{alignat*}{2}
        \mathcal{S}_h^k\mathcal{-D}_h^k&= 6kN_T-6N_T+2N_e^{\partial} -2kN_e^{\partial}-4k\Noe + 4\Noe\\
        &= 6kN_T-6N_T+2(N_e-\Noe)-2k(N_e-\Noe)-4k\Noe+4\Noe \qquad &&   \text{by }\eqref{id6}\\
        &=6kN_T-6N_T + 2N_e-2\Noe-2kN_e+2k\Noe-4k\Noe+4\Noe \qquad &&  \\
        &= 6kN_T-6N_T+2N_e-2kN_e+2\Noe-2k\Noe \qquad &&  \\
\end{alignat*}

We now notice a common factor, $2-2k$, for each term. Upon factorization, we manipulate some terms to obtain another identity which will be easy to recognize.
\begin{alignat*}{2}
      \mathcal{S_h^k-D_h^k}  &= -3N_T(2-2k)+N_e(2-2k) + \Noe(2-2k) \qquad &&  \\
        &= (2-2k)\left[-3N_T+N_e+\Noe\right] \qquad &&  \\
        &= (2-2k)\left[-3N_T + (\Noe+N_e^{\partial})+\left(2\Noe-\Noe\right)\right] \qquad &&   \text{by } \eqref{id6}\\
        &= (2-2k)\left[-3N_T+\Noe-\Noe+(N_e^{\partial}+2\Noe)\right] \qquad &&  \\
        &= (2-2k)\left[-3N_T+(N_e^{\partial}+2\Noe)\right] \qquad &&  \\
        &= (2-2k)\left[-3N_T+3N_T\right] \qquad &&  \text{by } \eqref{id3}\\
        &= (2-2k)(0) \qquad &&  \\
        &= 0 \qquad &&  
\end{alignat*}
\end{proof}

Now we can state the equivalent formulation of Strang's conjecture. 
\begin{theorem} Assume that $\Th$ has no singular vertices.   Strang's conjecture holds on $\Th$  (i.e. $\dim(\sk) = \mathcal{S}_h^k$)  if and only if 
\begin{equation}
  \text{Range} \left(\dive, \vk\right) =\dk.\label{RangeEquiv}  
\end{equation}
\end{theorem}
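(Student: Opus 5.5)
The plan is to run rank--nullity on the map $\dive:\vk\to\dk$ and feed in the Proposition (exactness at $\vk$) and Lemma \ref{SwithVandD}. First I will determine the kernel of $\rot$ on $\sk$. If $\rot s=0$, then both partial derivatives of $s$ vanish on each triangle. Hence $s$ is constant on each triangle, and because $s$ is continuous and $\Omega$ is connected, $s$ is a single global constant. Conversely, constants lie in $\sk$. So $\dim \ker(\rot,\sk)=1$.

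Next I apply rank--nullity to $\rot:\sk\to\vk$. Together with the Proposition, this gives
\begin{equation*}
\dim \sk = 1+\dim \text{Range}(\rot,\sk) = 1+\dim\ker(\dive,\vk).
\end{equation*}
Rank--nullity for $\dive:\vk\to\dk$ then gives
\begin{equation*}
\dim\ker(\dive,\vk)=\dim\vk-\dim\text{Range}(\dive,\vk).
\end{equation*}
Combining the two identities, I obtain
\begin{equation*}
\dim\sk=\dim\vk-\dim\text{Range}(\dive,\vk)+1.
\end{equation*}

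Since $\Th$ has no singular vertices, Lemma \ref{SwithVandD} gives $\mathcal{S}_h^k=\dim\vk-\dim\dk+1$. Subtracting this from the previous identity yields
\begin{equation*}
\dim\sk-\mathcal{S}_h^k=\dim\dk-\dim\text{Range}(\dive,\vk).
\end{equation*}
The range of $\dive$ on $\vk$ is a subspace of the finite-dimensional space $\dk$. It is indeed contained in $\dk$, since the divergence of a piecewise $P^{k-1}$ field is piecewise $P^{k-2}$. Therefore the right-hand side vanishes if and only if $\text{Range}(\dive,\vk)=\dk$. This proves both directions at once.

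No step is a real obstacle. The only points needing care are the kernel computation, which relies on the connectedness of $\Omega$, and the observation that $\text{Range}(\dive,\vk)\subset\dk$. The latter is what lets equality of dimensions upgrade to equality of spaces. The heavy lifting has already been done in the Proposition and in Lemma \ref{SwithVandD}.
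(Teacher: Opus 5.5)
Your proposal is correct and uses the same ingredients as the paper: rank--nullity for $\text{rot}$ on $\sk$ and for $\text{div}$ on $\vk$, exactness from the Proposition, $\dim\ker(\text{rot},\sk)=1$, and Lemma~\ref{SwithVandD}. The only difference is organizational: you derive the single identity $\dim\sk-\mathcal{S}_h^k=\dim\dk-\dim\text{Range}(\text{div},\vk)$ and read off both implications from it, whereas the paper proves one direction directly and the other by contradiction.
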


\begin{proof}
    $(\Rightarrow)$\\
    Assume that $\text{Range}\left(\dive, \vk\right)=D_h^{k-2}$. We want to show that Strang's conjecture is true. To see this, we will employ \textbf{\eqref{SwithVandD}}, and make use of the rank nullity theorem.

    Of course, it is important to verify that using the rank-nullity theorem is valid. Indeed, \textbf{rot}, and \textbf{div} are differential operators, so they are linear operators. As such, the linear maps in the complex must also satisfy the rank nullity theorem which states that for a linear map, $T,$ and finite vector spaces $A$ and $B$ that satisfy
\[T:A\longrightarrow B\]
\begin{equation}\text{dim}(A) = \text{rank}(T)+\text{nullity}(T).\label{rank-Nullity}\end{equation}
where we recall that  
\begin{alignat*}{1}
\text{rank}(T)= &\dim \text{Range}(T, A),\\
\text{nullity}(T)=&  \dim \text{Ker}(T, A).
\end{alignat*}

We can now write the following:
\begin{align*}&\text{rot}:\sk \longrightarrow \vk \\
&\text{div}: \vk \longrightarrow D_h^{k-2}\end{align*}\\
It follows from \eqref{rank-Nullity}, that
\begin{align}
    \text{dim}\left(S_h^k\right)&=\text{dim}\left(Range\left(\text{rot},S_h^k\right)\right)+\text{dim}\left(ker\left(rot,S_h^k\right)\right) \label{dimSRankNull}\\
    \text{dim}\left(\vk\right)&=\text{dim}\left(Range\left(div,\vk\right)\right) + \text{dim}\left(ker\left(div,\vk\right)\right)\label{dimVRankNull}
\end{align}

By hypothesis, we can see that 
\[\text{dim}\left(\vk\right)=\text{dim}\left(D_h^{k-2}\right)+\text{dim}\left(ker\left(div,\vk\right)\right)\]

But, we can use the fact that our sequence is exact to write
\[\text{dim}\left(\vk\right)=\text{dim}\left(D_h^{k-2}\right)+\text{dim}\left(Range\left(\text{rot},S_h^k\right))\right)\]
We can now substitute \eqref{dimSRankNull} into the equation above to see that
\[\text{dim}\left(\vk\right)=\text{dim}\left(D_h^{k-2}\right)+\text{dim}\left(S_h^k\right)-\text{dim}\left(\left(ker(\text{rot}, S_h^k\right)\right)\]

Since $\text{ker}\left(\text{rot},S_h^k\right)$ is the space of constant functions we have:

\[\text{dim}\left(ker\left(\text{rot},S_h^k\right)\right)=1.\]

Using this information, we can see that the result holds by writing
\[\text{dim}\left(\vk\right)=\text{dim}\left(D_h^{k-2}\right)+\text{dim}\left(S_h^k\right)-\text{dim}\left(\left(ker(\text{rot}, S_h^k\right)\right)\]
That implies that 
\[\text{dim}\left(\vk\right)=\text{dim}\left(D_h^{k-2}\right)+\text{dim}\left(S_h^k\right)-1\]
Which, in turn, means that
\[\text{dim}\left(\vk\right)-\text{dim}\left(D_h^{k-2}\right)+1=\text{dim}\left(S_h^k\right)\]

    So, we obtain
    \begin{math}\mathcal{S}_h^k= \text{dim}\left(S_h^k\right),\end{math} by \text{Lemma \eqref{SwithVandD}}\\\\

    $(\Leftarrow)$\\
    Assume $\mathcal{S}_h^k=\dim\left(\sk\right).$ We want to show $\text{Range}\left(\dive , \vk\right)=D_h^{k-2}.$  We will complete this proof by obtaining a contradiction. So, assume, to get a contradiction, that $\text{Range}\left(\dive , \vk\right)\subsetneq D_h^{k-2}$\\\\

    Since these are finite vector spaces, it must be true that \begin{equation}\dim\left(\text{Range}\left(\dive , \vk\right)\right)<\dim\left(D_h^{k-2}\right) \label{inequality1}. \end{equation}

    Notice that if the inequality was not strict, then $\text{Range}\left(\dive , \vk\right)$ would span $\dk$, since they are vector spaces. As such, we can use the inequality and the equations \eqref{dimSRankNull} and \eqref{dimVRankNull}

    By substituting into \eqref{dimVRankNull} and using \eqref{inequality1} with the fact that the sequence is exact, we have
    \[\dim\left(\vk\right) < \dim\left(\dk\right)+\text{rank}\left(\dive, \vk\right).\]

    By substituting in \eqref{dimSRankNull}, we have
    \[\dim\left(\vk\right) < \dim\left(\dk\right)+\dim\left(\sk\right)-1.\]

    This means
    \[\dim\left(\vk\right) -\dim\left(\sk\right)+1 <\dim\left(\dk\right)\]

    But by \textbf{\eqref{SwithVandD}}, we get
    \[\dim\left(\vk\right) -[\dim\left(\vk\right)-\dim\left(\dk\right)+1]+1 <\dim\left(\dk\right)\]

    Working this inequality out, we get
    \begin{equation*}
        \dim\dk<\dim\dk
    \end{equation*}

    This is an absurdity, so, $\text{Range}\left(\dive, \vk\right)=\dk$
    
\end{proof}

\section{The NGSolve code to check the conjecture}\label{sec:NGSolve_Code}
In  this section, we develop notation to help us create a linear algebra problem to determine if Strang's conjecture holds for a given mesh. 
We begin by defining an inner product:
Suppose $p,q\in \dk$ we write
\begin{equation*}
    \langle p \ ,q\rangle\ := \ \int _\Omega p\cdot q.
\end{equation*} 
Now, we consider the inner product space: $(\qk,\ \langle \cdot \ ,\cdot\rangle )$, where $\qk \subset \dk$ and is given by
\begin{align*}
     \qk &:= \text{Range}(\dive, \sk) \\
     \intertext{and its orthogonal complement}
     \qkt &:= \{ p \in \dk \ :\ \langle p,\ q\rangle \ =\ 0 \text { for all } q \in \qk \}.
\end{align*}

We use the following result to develop a program using NGsolve which determines the rank and nullity associated with a given mesh.

\begin{theorem}\label{thm:equivthm}
Range $(\dive, \vk) = \dk$ if and only if dim $\qkt$ $= 0$.
\end{theorem}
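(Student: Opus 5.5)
The plan is a finite-dimensional orthogonal decomposition argument. I read the definition of $\qk$ as $\qk = \text{Range}(\dive, \vk)$, which is what the statement needs. As literally written, $\text{Range}(\dive,\sk)$ would be the zero space, since the proposition above shows $\dive \rot s = 0$ and $\dive$ is not naturally applied to the scalar space $\sk$. With this reading, $\qk$ is a linear subspace of $\dk$, because $\dive$ is linear and maps $\vk$ into $\dk$. The theorem then reduces to the statement that a subspace of a finite-dimensional inner product space is the whole space exactly when its orthogonal complement is trivial.

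First I will check that $\langle \cdot,\cdot\rangle$ is a genuine inner product on $\dk$. Bilinearity and symmetry are clear. For positive definiteness, suppose $\langle p,p\rangle = \int_\Omega p^2 = \sum_{T}\int_T p|_T^2 = 0$. Then each $p|_T$ is a polynomial vanishing almost everywhere on the open triangle $T$, so $p|_T \equiv 0$, and hence $p = 0$ in $\dk$. This is the only place where the specific structure of $\dk$ enters.

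Next, since $\dk$ is finite dimensional by \eqref{dimd}, the standard orthogonal decomposition theorem (via Gram--Schmidt applied to a basis of $\qk$, extended to a basis of $\dk$) gives $\dk = \qk \oplus \qkt$. Consequently
\[
\dim \dk = \dim \qk + \dim \qkt .
\]

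Finally, I will prove the two implications. If $\text{Range}(\dive,\vk) = \dk$, then $\dim \qk = \dim \dk$, and the displayed identity forces $\dim \qkt = 0$. Conversely, if $\dim \qkt = 0$, then $\dim \qk = \dim \dk$. A subspace of a finite-dimensional space with full dimension is the whole space, so $\text{Range}(\dive,\vk) = \dk$. For the forward direction one can also argue directly: any $p \in \qkt$ satisfies $\langle p, p\rangle = 0$ because $p \in \dk = \qk$, so $p = 0$.

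There is no real obstacle here. The only points needing care are the interpretation of the definition of $\qk$ and the positive definiteness of the $L^2$ pairing on piecewise polynomials.
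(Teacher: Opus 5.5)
Your proposal is correct and follows the paper's own argument. Both rest on the orthogonal decomposition $\dk = \qk \oplus \qkt$ in the finite-dimensional space $\dk$, from which $\qk = \dk$ holds exactly when $\qkt$ is trivial; your reading $\qk = \text{Range}(\dive,\vk)$ is the intended one (the paper's $S_h^k$ in that definition is a typo), and your positive-definiteness check on $\dk$ fills in a detail the paper leaves implicit.
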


\begin{proof}
  We have that  
  \begin{equation*}
  \dk= \qk\oplus \qkt.    
  \end{equation*}
 Hence, $\qk= \dk$ if and only if $\qkt$ is trivial.  
\end{proof}
% \begin{proof}
%     $(\Rightarrow)$
%     Assume Range $(\dive, \vk) = \dk$, then for all $p \in \dk$ there exists a $v \in \vk$ such that $\dive v = p$. Now, if 
%     \begin{equation}
%         \int_\Omega p\ \dive v = 0
%     \end{equation}
%     then 
%     \begin{equation}
%         \int_\Omega p^2 = 0
%     \end{equation}
%     implying that $p^2 =0$, hence $p=0$.

%     $(\Leftarrow)$
%     Assume, by way of contradiction, that Range $(\dive, \vk) \ne \dk$, that is Range $(\dive, \vk) \subsetneq  \dk$. We show that for all $ \ p \ \in \dk$, there exists a $v \in \vk2$ such that $\int _{\Omega} p\ \text{div}(v)\ \ne \ 0$.\\
%     Then $D_h^{k-2},\ < \cdot \ ,\cdot>$ is a Hilbert (inner product) space. 
%     Now, let $Q_h^{k-2} = \text{Range}(\dive, \vk )$. Since $Q_h^{k-2}\subsetneq \dk$, $\dk = \qk \oplus \qkt$, where $\qkt$ is the non-trivial orthogonal complement of Range $(\dive, \vk )$. Also, since $Q_h^{k-2}\subsetneq \dk$, for all $q \in Q_h^{k-2}$ there is a $v \in \vk$ so that $\dive v = q$\\
%     Now, choose $p \in [Q_h^{k-2}]^\bot$. Then, we have $\int _\Omega p\cdot q = 0$ for all $q \in Q_h^{k-2}$ since $<p,\ q> = 0.$ Which is a contradiction since we supposed the existence of at least one $v$ with div$(v) = q$ and $\int _{\Omega} p\ \text{div}v\ \ne \ 0$. 
% \end{proof}

In collaboration with Dr. Maurice Fabien, we implement a code using NGSolve to assemble and solve the matrix associated with a mesh and determine if Strang's conjecture holds. Our code takes in a mesh, and using the principles of Theorem \ref{thm:equivthm}, it figures out if dim $\qkt$ $= 0$ or not.  

The code uses a basis for our two spaces  $\dk= \text{span}\{p_1, \ldots, p_L\}$ and  $\vk= \text{span}\{v_1, \ldots v_M\}$. Then, it computes that matrix $M \times L$ matrix  $A$ with the following entries:
\begin{equation}
   A_{ji} =\langle \dive v_j,  p_i \rangle.
\end{equation}
Then, we find the $L$-vectors that are in the kernel of the matrix $A$:
\begin{align*}
  A\vec{X}=0.  
\end{align*}
Then, it is easy to see that 
\[
p:=X_1 p_1 + \cdots+X_L p_L, \quad \text{then }  p \in \qkt.
\]

Thus the  dim $\qkt$ $= 0$ if and only if the kernel of the matrix $A$ is trivial.  Or, Strang's conjecture holds for the given mesh if and only if Kernel of the matrix $A$ is trivial.

We begin by creating a helper function, \verb|assemble_strang_matrix(k, maxh, quad_order)| (ASM), where \emph{k} is the degree of the $C^1$ continuous piecewise polynomials, and \emph{maxh} is the maximum edge size for any edge in our triangulation. In ASM, we use another helper function that returns a mesh for ASM to create a matrix for. We define a vector-valued Sobolev ($H^1$) space to represent our $\vk$ space (\emph{V}), and a discontinuous $L^2$ space to represent out $\dk$ space (\emph{Q}). Then, create a product space of \emph{V} and \emph{Q} so we can solve the linear system $A\vec{x}=\vec{0}$, where $A$ is the matrix assembled from our mesh. Test and trial functions are used to create and assemble the bilinear form, or sparse system matrix, that is populated with integrators of the form 
\begin{equation}
    \int_\Omega p_i \dive v_j \quad \text{ for } i= 1,2,...,L \text{ and } j= 1, 2, ..., M 
\end{equation}
where $\{p_1,\ p_2,\ ...,\ p_L\}$ is a basis for $\dk$ and $\{v_1,\ v_2,\ ...,\ v_M\}$ is a basis for $\vk$. 

We extract the non-zero values of the sparse matrix and create a compressed sparse row matrix to create a matrix representation of our mesh. 

ASM is implemented through a procedural oriented program. We first choose \emph{k} (the degree of our $C^1$ continuous polynomials) and \emph{maxh}. Then, we call ASM which returns a SciPy sparse matrix (\emph{A}), the mesh used to create the matrix, \emph{V}, and \emph{Q}. \emph{A} is converted to a dense array representation (\emph{Ad}). We singular value decompose \emph{Ad} and sum over the on-zero singular values of \emph{A} to determine the rank. Using the Rank-nullity theorem of Linear Algebra, we can determine the nullity of our matrix. Then by \ref{thm:equivthm}, we can conclude that Strang's conjecture holds if our mesh yields a matrix with the trivial nullspace, and Strang's conjecture does not hold otherwise. 

\subsection{Verification of our code on the Morgan-Scott Mesh}
We verify our code using the Morgan-Scott mesh, a counterexample for Strang's conjecture in the case $k=2$. We begin by constructing the Morgan-Scott mesh in NGsolve (Figure 1) and create its matrix representation using ASM. 
As expected, we get a non-trivial nullity, $\text{nullity}(A) = 1$, implying Strang's conjecture doesn't hold for this mesh in the case $k=2$. 

\begin{figure}[h!]
    \centering
    \includegraphics[width=0.375\linewidth]{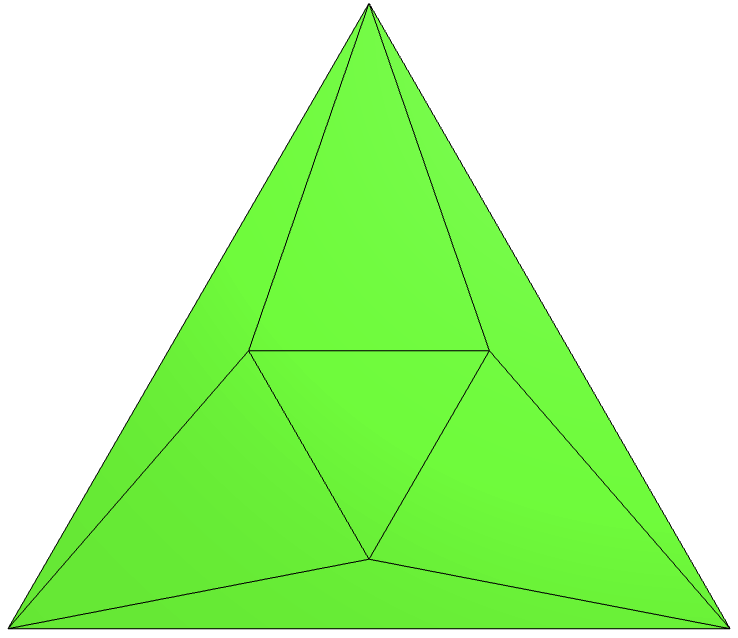}
    \caption{Morgan-Scott mesh created using NGsolve.}
    \label{fig:mesh1}
\end{figure}

After verifying our program works, we extract the nullspace basis for the Morgan-Scott mesh. The basis is: 
\begin{equation*}
    \vec{X}=[ 1,\ -2,\ 1,\ -2,\ 1,\ -2,\ 7]^T;
\end{equation*}
So, for any $t \in \mathbb{R}$, then
\begin{equation*}
    A (t \overrightarrow{X})= \overrightarrow{0}.
\end{equation*}

\section{Strong Collapsing Triangulation}\label{sec:strong_collapsing}
As we saw above (and was first verified by Morgan and Scott \cite{morgan1990dimension}) the simple Morgan-Scott mesh gives a counter-example in the case $k=2$. However, we also note that for a generic mesh the conjecture will hold.  We note that the simplicial complex associated with the Morgan-Scott mesh is a collapsible simplicial complex \cite{whitehead1950simple}. However,  as has been noted by others it is not strongly collapsible \cite{barmak2012strong} (see our definition below).

In this section we show that on any strongly collapsible complex of the plane that forms a triangulation will satisfy Strang's conjecture for $k=2$, provided that there are no quasi-singular vertices appearing in our process of a strong collapse (which we explain below).  We give a definition of a strong collapsible complex  that is suited for our case. For a planar simplicial complex that forms a triangulation, our definition will imply the notion in \cite{barmak2012strong}. However, the authors  in \cite{barmak2012strong} consider a much broader notion.

We will need a series of definitions.  As we have been doing, we assume that $K$ is a simplicial complex in the plane.
\begin{definition}
We say that the simplicial complex $K$ forms a triangulation if every edge or vertex of the mesh is the edge or vertex of at least one triangle of $K$.
\end{definition}
In other words, this does not allow edges or vertices that do not have at least one associated triangle. 
\begin{figure}[!h]
    \centering
        %\centering
        \begin{tikzpicture}[scale=0.45, transform shape]
            \node[circle, draw, fill=black, inner sep = 0pt] (A) at (0,4){};
            
            \node[circle, draw, fill=black, inner sep = 0pt] (B) at (-2,3){};
            
            \node[circle, draw, fill=black, inner sep = 0pt] (D) at (-1.5,-1.15){};
            \node[circle, draw, fill=black, inner sep = 0pt] (E) at (0.6,-1.83){};
            \node[circle, draw, fill=black, inner sep = 0pt] (F) at (2.4,-0.83){};
            \node[circle, draw, fill=black, inner sep = 0pt] (G) at (3.2,1.3){};
            \node[circle, draw, fill=red, inner sep = 0pt] (H) at (2.1,3.3){};
            \node[circle, draw, fill=red, inner sep = 0pt] (I) at (0,1){};
        
            %Remove all the edges necessary to satisfy the definition of Beta(E,K_2) Using previous complex
            
            \draw (A) -- (I);
            \draw (A) -- (B);
            \draw (B) -- (I);
            
            \draw (A) -- (H);
            \draw (H) -- (I);
            
            \draw (G) -- (I);
            \draw (G) -- (H);
            
            \draw (F) -- (I);
            \draw (F) -- (G);
        
            \draw (F) -- (E);
            \draw (E) -- (I);
        
            \draw (I) -- (D);
            
        \end{tikzpicture}

    \caption{This is not considered a triangulation.}
    \label{fig:hangingedge}
\end{figure}

\begin {definition}
    An edge $e \in \Delta_1(K)$ of the simplicial complex $K$ is a \emph{boundary edge} if  it is an edge of at most one triangle of $K$.
\end{definition}

\begin{definition}
 A vertex $v$ is a \emph{boundary vertex} of the simplicial complex $K$ if there  there is a boundary edge $e$ of K such that $v \in \Delta_0(e)$.
\end{definition}

% \begin{definition}
%     A vertex $v$ is an \emph{interior vertex} of a simplical complex $K$ if 
% \end{definition}
% \parneet{is boundary edge defined?}
% \begin{definition}
%     A \emph{boundary vertex} is a vertex that is not an interior vertex. 
% \end{definition}

We will use the following notation when $K$ is a simplex in the plane and $x \in \Delta_0(K)$ is a vertex.  
\begin{equation*}
  \mathfrak{Z}(x, K)=\{ \sigma \in \Delta(K): x \in \Delta_0(\sigma). \}  
\end{equation*}.
In other words $\mathfrak{Z}(x, K)$ are all the simplices of $K$ that have $x$ as a vertex. The union of these will be the star of $x$ in the simplicial complex $K$.

We also consider the collection simplicial complex  that make up the closed star. 
\begin{equation*}
  \overline{\mathfrak{Z}}(x, K)=\{ \sigma \in \Delta(K): \sigma \in  \Delta (T), \text{ for some } T \in \mathfrak{Z}(x, K) \}  
\end{equation*}.

\begin{figure}[h]
    \begin{subfigure}[b]{.45\textwidth}
        \centering
        \begin{tikzpicture}
            \node[circle, draw, fill=black, inner sep = 0pt] (A) at (0,4){};
            \node[circle, draw, fill=black, inner sep = 0pt] (B) at (-1,3){};
            \node[circle, draw, fill=black, inner sep = 0pt] (C) at (1,2){};
            \node[circle, draw, fill=black, inner sep = 0pt] (D) at (-3,3){};
            \node[circle, draw, fill=black, inner sep = 0pt] (E) at (-2,5){};
            \node[circle, draw, fill=black, inner sep = 0pt] (F) at (-2,1){};
            \node[circle, draw, fill=black, inner sep = 0pt] (G) at (1,1){};
            \node[circle, draw, fill=black, inner sep = 0pt] (H) at (-4,1){};
            \node[circle, draw, fill=black, inner sep = 0pt, label=above:{$x_0$}] (I) at (-4,5){};
    
            \filldraw [fill=black!20, draw=black, label={T1}] (I.center) -- (E.center) -- (D.center) -- cycle;
    
            \filldraw [fill=black!20, draw=black, label={T1}] (I.center) -- (H.center) -- (D.center) -- cycle;
    
            \node at (-2.75, 4.3) {$T_1$};
            \node at (-3.5, 3) {$T_2$};
            
            \draw [ultra thick] (E) -- (I) node[midway, above]{$e_1$};
            \draw (E) -- (D);
            \draw [ultra thick] (D) -- (I) node[pos=.3, right]{$e_2$};
            
            \draw (E) -- (B);
            \draw (B) -- (D);
    
            \draw (E) -- (A);
            \draw (B) -- (A);
        
            \draw (C) -- (D);
            \draw (B) -- (C);
    
            \draw (A) -- (C);
            
            \draw (C) -- (F);
            \draw (F) -- (D);
    
            \draw (C) -- (G);
            \draw (C) -- (F);
            \draw (F) -- (G);
    
            \draw (F) -- (H);
            \draw (H) -- (D);
    
            \draw [ultra thick](H) -- (I) node[midway, left]{$e_3$};
        \end{tikzpicture}
        \caption{The collection of simplicies that make up the star of $x_0$}
        \label{subfig:starx}
    \end{subfigure}
    \hfill
    \begin{subfigure}[b]{.45\textwidth}
        \centering
        \begin{tikzpicture}
            \node[circle, draw, fill=black, inner sep = 0pt] (A) at (0,4){};
            \node[circle, draw, fill=black, inner sep = 0pt] (B) at (-1,3){};
            \node[circle, draw, fill=black, inner sep = 0pt] (C) at (1,2){};
            \node[circle, draw, fill=black, inner sep = 0pt] (D) at (-3,3){};
            \node[circle, draw, fill=black, inner sep = 0pt] (E) at (-2,5){};
            \node[circle, draw, fill=black, inner sep = 0pt] (F) at (-2,1){};
            \node[circle, draw, fill=black, inner sep = 0pt] (G) at (1,1){};
            \node[circle, draw, fill=black, inner sep = 0pt] (H) at (-4,1){};
            \node[circle, draw, fill=black, inner sep = 0pt, label=above:{$x_0$}] (I) at (-4,5){};
    
            \filldraw [fill=black!20, draw=black, ultra thick] (I.center) -- (E.center) -- (D.center) -- cycle;
    
            \filldraw [fill=black!20, draw=black, ultra thick] (I.center) -- (H.center) -- (D.center) -- cycle;
    
            \node at (-3, 4.3) {$T_1$};
            \node at (-3.5, 3) {$T_2$};
            
            \draw [ultra thick] (E) -- (I) node[midway, above]{$e_1$};
            \draw (E) -- (D) node[midway, right]{$e_4$};
            \draw [ultra thick] (D) -- (I) node[pos=.3, right]{$e_2$};
            
            \draw (E) -- (B);
            \draw (B) -- (D);
    
            \draw (E) -- (A);
            \draw (B) -- (A);
        
            \draw (C) -- (D);
            \draw (B) -- (C);
    
            \draw (A) -- (C);
            
            \draw (C) -- (F);
            \draw (F) -- (D);
    
            \draw (C) -- (G);
            \draw (C) -- (F);
            \draw (F) -- (G);
    
            \draw (F) -- (H);
            \draw [ultra thick] (H) -- (D) node[midway, right]{$e_5$};
    
            \draw [ultra thick](H) -- (I) node[midway, left]{$e_3$};
    
        \end{tikzpicture}
        \caption{The collection of simplicies that make up the closed star of $x_0$.}
        \label{subfig:clstarx}
    \end{subfigure}
    \caption{Example of (a) $\mathfrak{Z}(x_0, K)$ and (b) $\overline{\mathfrak{Z}}(x_0, K).$}
        \label{fig:stars}
\end{figure} 
We will also use the notation.
\begin{equation*}
\mathfrak{\beta}(x, K)= \mathfrak{Z}(x,K) \cup \{ \sigma  \in \overline{\mathfrak{Z}}(x, K) \backslash  \mathfrak{Z}(x, K): \sigma \notin \Delta(T),\forall T \in \Delta_2(K \backslash \mathfrak{Z}(x, K)) \}.    
\end{equation*}
These are all the simplices  of  $\mathfrak{Z}(x,K)$ plus those simplices belonging to $\overline{\mathfrak{Z}}(x, K)$ that are not sub-simplices of any other triangle not belonging to  $\mathfrak{Z}(x, K)$. 

We see that if $K$ is a simplical complex in the plane that make up a triangulation. And, if $x$ is a boundary vertex of $K$  
\begin{equation*}
   K \backslash \mathfrak{\beta}(x, K)  
\end{equation*}
is a simplicial complex that also forms a triangulation. 

\begin{definition}\label{strong}
    A simplicial complex \emph{K} of the plane that forms a triangulation is \emph{strongly collapsible} if there exists $x_0, x_1, ..., x_{N} \in \Delta_0(K)$ and sequence of simplicial complexes: 
    \begin{equation*}
        K=K_0 \supseteq K_1 \supseteq ... \supseteq K_N 
    \end{equation*} 
    such that \\
    1) $K_i$  forms a triangulation,  \\
    2) $x_i$ is boundary vertex of $K_i$ and contained in at most two triangles of $K_i$. If it is contained in two triangles $T_1$, and $T_2$ they share a common edge.  \\
    3) $K_{i+1}= \{ \sigma \in K_i: \sigma \notin \beta(x_i, K_i)\}$, \\ 
    4) $K_N$ is composed of one triangle and its sub-simplices.
\end{definition}

We immediately see that the Morgan-Scott mesh is not strongly collapsing since every boundary vertex has three triangles that contain it. Now, consider the following demonstration of strongly collapsing a simplicial complex, $K$.

\begin{figure}[!htbp]
    \centering
    %K0
    \begin{subfigure}[b]{.45\textwidth}
        %\centering
        \begin{tikzpicture}
            % Define the vertices (nodes)
            \node[circle, draw, fill=black, inner sep = 0pt] (A) at (0,0){};
            \node[circle, draw, fill=black, inner sep = 0pt] (B) at (0,2){};
            \node[circle, draw, fill=black, inner sep = 0pt] (C) at (1,1){};
            \node[circle, draw, fill=red, label=above:{$x_0$}, inner sep = 0pt] (D) at (1,4){};
            \node[circle, draw, fill=black, inner sep = 0pt] (E) at (2,3){};
            \node[circle, draw, fill=black, inner sep = 0pt] (F) at (3,0){};
            \node[circle, draw, fill=black, inner sep = 0pt] (G) at (4,2){};
            \node[circle, draw, fill=black, inner sep = 0pt] (H) at (5,1){};
            \node[circle, draw, fill=black, inner sep = 0pt] (I) at (5,3){};
            \node[circle, draw, fill=black, inner sep = 0pt] (J) at (6,2){};
        
            % Draw the edge between them
            %Triangle by triangle
            \draw (A) -- (B);
            \draw (A) -- (C);
            \draw (B) -- (C);
        
            \draw (C) -- (E);
            \draw (B) -- (E);
            \draw (B) -- (C);
        
            \draw (B) -- (D);
            \draw (B) -- (E);
            \draw (D) -- (E);
        
            \draw (C) -- (E);
            \draw (C) -- (G);
            \draw (E) -- (G);
        
            \draw (E) -- (I);
            \draw (E) -- (G);
            \draw (G) -- (I);
        
            \draw (G) -- (H);
            \draw (H) -- (F);
            \draw (G) -- (F);
        
            \draw (G) -- (C);
            \draw (C) -- (F);
            \draw (F) -- (G);
        
            \draw (A) -- (F);
            \draw (F) -- (C);
            \draw (A) -- (C);
        
            \draw (D) -- (I);
        
            \draw (I) -- (H);
        
            \draw (J) -- (H);
        
            \draw (J) -- (I);
        \end{tikzpicture}
        \caption{Simplicial complex $K$.}
        \label{subfig:k0}
    \end{subfigure}
    \hfill
    %K1
    \begin{subfigure}[b]{.45\textwidth}
        %\centering
        \begin{tikzpicture}
            %Start K_1 by removing D.
        
            \node[circle, draw, fill=black, inner sep = 0pt] (A) at (0,0){};
            \node[circle, draw, fill=red, label=above:{$x_1$}, inner sep = 0pt] (B) at (0,2){};
            \node[circle, draw, fill=black, inner sep = 0pt] (C) at (1,1){};
            %remove D.
            \node[circle, draw, fill=black, inner sep = 0pt] (E) at (2,3){};
            \node[circle, draw, fill=black, inner sep = 0pt] (F) at (3,0){};
            \node[circle, draw, fill=black, inner sep = 0pt] (G) at (4,2){};
            \node[circle, draw, fill=black, inner sep = 0pt] (H) at (5,1){};
            \node[circle, draw, fill=black, inner sep = 0pt] (I) at (5,3){};
            \node[circle, draw, fill=black, inner sep = 0pt] (J) at (6,2){};
        
            %Copy all previous edges but exclude the ones removed by Beta(D,K_0).
            \draw (A) -- (B);
            \draw (A) -- (C);
            \draw (B) -- (C);
        
            \draw (C) -- (E);
            \draw (B) -- (E);
            \draw (B) -- (C);
        
            \draw (C) -- (E);
            \draw (C) -- (G);
            \draw (E) -- (G);
        
            \draw (E) -- (I);
            \draw (E) -- (G);
            \draw (G) -- (I);
        
            \draw (G) -- (H);
            \draw (H) -- (F);
            \draw (G) -- (F);
        
            \draw (G) -- (C);
            \draw (C) -- (F);
            \draw (F) -- (G);
        
            \draw (A) -- (F);
            \draw (F) -- (C);
            \draw (A) -- (C);

            \draw (I) -- (H);
        
            \draw (J) -- (H);
        
            \draw (J) -- (I);
        \end{tikzpicture}
        \caption{$K_1$, $x_1$.}
        \label{subfig:k1}
    \end{subfigure}

    \vspace{0.5cm}
    
    %K2
    \begin{subfigure}[b]{0.45\textwidth}
        %\centering
        \begin{tikzpicture}
            %Copy the previous complex, but we strongly collapse using vertex B. 
        
            %Remove vertex B
            \node[circle, draw, fill=black, inner sep = 0pt] (A) at (0,0){};
            %remove B.
            \node[circle, draw, fill=black, inner sep = 0pt] (C) at (1,1){};
            %remove D.
            \node[circle, draw, fill=red, label=above:{$x_2$}, inner sep = 0pt] (E) at (2,3){};
            \node[circle, draw, fill=black, inner sep = 0pt] (F) at (3,0){};
            \node[circle, draw, fill=black, inner sep = 0pt] (G) at (4,2){};
            \node[circle, draw, fill=black, inner sep = 0pt] (H) at (5,1){};
            \node[circle, draw, fill=black, inner sep = 0pt] (I) at (5,3){};
            \node[circle, draw, fill=black, inner sep = 0pt] (J) at (6,2){};
        
            %Copy previous complex but remove the two triangles accordingly, with respect to our definition Beta(B,K_1)
        
            \draw (C) -- (E);
        
            \draw (C) -- (E);
            \draw (C) -- (G);
            \draw (E) -- (G);
        
            \draw (E) -- (I);
            \draw (E) -- (G);
            \draw (G) -- (I);
        
            \draw (G) -- (H);
            \draw (H) -- (F);
            \draw (G) -- (F);
        
            \draw (G) -- (C);
            \draw (C) -- (F);
            \draw (F) -- (G);
        
            \draw (A) -- (F);
            \draw (F) -- (C);
            \draw (A) -- (C);

            \draw (I) -- (H);
        
            \draw (J) -- (H);
        
            \draw (J) -- (I);
            
        \end{tikzpicture}
        \caption{$K_2$, $x_2$.}
        \label{subfig:k2}
    \end{subfigure}
    \hfill
    %K3
    \begin{subfigure}[b]{0.45\textwidth}
        %\centering
        \begin{tikzpicture}
            %Remove E as done for other vertices in the previous simplicial complexes
            \node[circle, draw, fill=black, inner sep = 0pt] (A) at (0,0){};
            %remove B.
            \node[circle, draw, fill=black, inner sep = 0pt] (C) at (1,1){};
            %remove D.
            %remove E.
            \node[circle, draw, fill=black, inner sep = 0pt] (F) at (3,0){};
            \node[circle, draw, fill=black, inner sep = 0pt] (G) at (4,2){};
            \node[circle, draw, fill=black, inner sep = 0pt] (H) at (5,1){};
            \node[circle, draw, fill=black, inner sep = 0pt] (I) at (5,3){};
            \node[circle, draw, fill=red, label=right:{$x_3$}, inner sep = 0pt] (J) at (6,2){};
        
            %Remove all the edges necessary to satisfy the definition of Beta(E,K_2) Using previous complex
            
            \draw (C) -- (G);
        
            \draw (G) -- (I);
        
            \draw (G) -- (H);
            \draw (H) -- (F);
            \draw (G) -- (F);
        
            \draw (G) -- (C);
            \draw (C) -- (F);
            \draw (F) -- (G);
        
            \draw (A) -- (F);
            \draw (F) -- (C);
            \draw (A) -- (C);

            \draw (I) -- (H);
        
            \draw (J) -- (H);
        
            \draw (J) -- (I);
            
        \end{tikzpicture}
        \caption{$K_3$, $x_3$.}
        \label{subfig:k3}
    \end{subfigure}

    \vspace{0.5cm}
    \begin{subfigure}[b]{0.45\textwidth}
        \begin{tikzpicture}
            %Now we showcase strong collapsing works for one triangle by removing Beta(J,K_3)
        
            %From previous complex, remove node J.
            \node[circle, draw, fill=red, label=left:{$x_4$}, inner sep = 0pt] (A) at (0,0){};
            %remove B.
            \node[circle, draw, fill=black, inner sep = 0pt] (C) at (1,1){};
            %remove D.
            %remove E.
            \node[circle, draw, fill=black, inner sep = 0pt] (F) at (3,0){};
            \node[circle, draw, fill=black, inner sep = 0pt] (G) at (4,2){};
            \node[circle, draw, fill=black, inner sep = 0pt] (H) at (5,1){};
            \node[circle, draw, fill=black, inner sep = 0pt] (I) at (5,3){};
            %remove J.
        
            %From previous complex, remove the simplicies described by Beta(J,K_3)
            
            \draw (C) -- (G);
        
            \draw (G) -- (I);
        
            \draw (G) -- (H);
            \draw (H) -- (F);
            \draw (G) -- (F);
        
            \draw (G) -- (C);
            \draw (C) -- (F);
            \draw (F) -- (G);
        
            \draw (A) -- (F);
            \draw (F) -- (C);
            \draw (A) -- (C);
        
            \draw (H) -- (I);
        \end{tikzpicture}
        \caption{$K_4$, $x_4$.}
        \label{subfig:k4}
    \end{subfigure}
    \hfill
    \begin{subfigure}[b]{0.45\textwidth}
        \begin{tikzpicture}
            %We get rid of one triangle again this time, we remove A. Using the previous complex, we get
            %remove A.
            %remove B.
            \node[circle, draw, fill=black, inner sep = 0pt] (C) at (1,1){};
            %remove D.
            %remove E.
            \node[circle, draw, fill=black, inner sep = 0pt] (F) at (3,0){};
            \node[circle, draw, fill=black, inner sep = 0pt] (G) at (4,2){};
            \node[circle, draw, fill=red, label=right:{$x_5$}, inner sep = 0pt] (H) at (5,1){};
            \node[circle, draw, fill=black, inner sep = 0pt] (I) at (5,3){};
            %remove J.
        
            %Using the previous complex, we remove the edges associated to Beta(A,K_4)
               
            \draw (C) -- (G);
        
            \draw (G) -- (I);
        
            \draw (G) -- (H);
            \draw (H) -- (F);
            \draw (G) -- (F);
        
            \draw (G) -- (C);
            \draw (C) -- (F);
            \draw (F) -- (G);
        
            \draw (F) -- (C);
        
            \draw (H) -- (I);
            
        \end{tikzpicture}
        \caption{$K_5$, $x_5$.}
        \label{subfig:k5}
    \end{subfigure}

    \vspace{0.5cm}

    \begin{subfigure}[b]{0.45\textwidth}
        \begin{tikzpicture}
            \centering
            %Remove H and we will leave K_N as a triangle for now. Notice that removing H would require us to remove I (by definition of Beta(H,K_5). Using the previous complex, we get
        
             %remove A.
            %remove B.
            \node[circle, draw, fill=red,label = above:{$x_6$}, inner sep = 0pt] (C) at (1,1){};
            %remove D.
            %remove E.
            \node[circle, draw, fill=black, inner sep = 0pt] (F) at (3,0){};
            \node[circle, draw, fill=black, inner sep = 0pt] (G) at (4,2){};
            %remove H.
            %remove I.
            %remove J.
        
            %Using Beta(H,K_5), we make use of the previous complex to remove the relevant edges.
        
            \draw (G) -- (C);
            \draw (F) -- (G);
        
            \draw (F) -- (C);
        \end{tikzpicture}
        \caption{$K_6$ is the resulting simplicial complex after strongly collapsing $K$ to a triangle.}
        \label{subfig:k6}
    \end{subfigure}
    \caption{Example of strongly collapsing simplicial complex $K$ into a triangle.}
    \label{fig:strongcollapse}
\end{figure}

\newpage
We will need a preliminary lemma which in turn needs the definition of a hat function.  Let $\Th$ be a
mesh and let $x$ be a vertex  of the mesh. We let $\lambda_x \in V_h^1(\Th)$ be the piecewise linear function that is continuous on the domain formed by $\Th$  such that 
$$
    \lambda_x(y)=
    \begin{cases}
        1\qquad\text{if } y =x, y \in \Delta_0(\Th) \\
        0\qquad\text{if } y \neq x, y \in \Delta_0(\Th). 
    \end{cases}\label{lambdaDef}
    $$

\begin{lemma}\label{lemma101}
Consider a vertex $x$ and suppose that $x \in \Delta_0(T_1), x \in \Delta_0(T_2)$ for two triangles $T_1$ and $T_2$ with a common edge $e \in \Delta_1(T_1), e \in \Delta_1(T_2)$. Let $e_i \in \Delta_1(T_i)$ opposite to the vertex $x$.  We assume that $e_1$ and $e_2$ do not lie in the same line (see fig. \ref{fig:colinearedges}). For any piecewise constant function $p$ defined on $T_1 \cup T_2$ with $p|_{T_i}$ being constant there exists a constant vector $\vec{c}$ such that  

\begin{equation*}
    \dive (\vec{c} \lambda_x)= p  \quad \text{ on }  T_1 \cup T_2.
\end{equation*}
where $\lambda_x$ is the hat function associated to $x$. 
\end{lemma}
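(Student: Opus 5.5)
Since $\vec{c}$ is constant, on each triangle $T_i$ we have $\dive(\vec{c}\lambda_x)=\vec{c}\cdot\nabla(\lambda_x|_{T_i})$, and $\nabla(\lambda_x|_{T_i})$ is a constant vector because $\lambda_x$ is linear on $T_i$. The plan is therefore to reduce the statement to a $2\times 2$ linear system. Writing $p|_{T_i}=p_i$ and $\vec{g}_i:=\nabla(\lambda_x|_{T_i})$, we must find $\vec{c}\in\mathbb{R}^2$ with
\begin{equation*}
\vec{g}_1\cdot\vec{c}=p_1,\qquad \vec{g}_2\cdot\vec{c}=p_2 .
\end{equation*}
This system is solvable for every choice of $(p_1,p_2)$ if and only if $\vec{g}_1$ and $\vec{g}_2$ are linearly independent. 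Since $\vec{c}\lambda_x$ is continuous on $T_1\cup T_2$, the divergence is well defined elementwise there, and nothing more is needed.

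The key step, and essentially the only one with content, is to identify $\vec{g}_i$ geometrically. On $T_i$ the function $\lambda_x$ vanishes on the edge $e_i$ opposite $x$ and equals $1$ at $x$. Hence $\vec{g}_i$ is perpendicular to $e_i$: it is $\vec{n}_i/h_i$, where $\vec{n}_i$ is the unit normal to $e_i$ pointing toward $x$ and $h_i$ is the distance from $x$ to the line containing $e_i$. In particular $\vec{g}_i\neq 0$.

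It then remains to show $\vec{g}_1\not\parallel\vec{g}_2$. If they were parallel, then $\vec{n}_1=\pm\vec{n}_2$, so the lines containing $e_1$ and $e_2$ would be parallel. However, $e_1$ and $e_2$ share a vertex, namely the endpoint of the common edge $e$ other than $x$. Two parallel lines through a common point coincide, so $e_1$ and $e_2$ would lie on the same line, contradicting the hypothesis. Thus $\det[\vec{g}_1\ \vec{g}_2]\neq 0$, and $\vec{c}$ is obtained by inverting the $2\times2$ matrix, for example by Cramer's rule.

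The main point requiring care is this last geometric step: using the shared vertex of $e_1$ and $e_2$ to turn ``not collinear'' into ``not parallel.'' This is exactly where the hypothesis illustrated in Figure \ref{fig:colinearedges} is used. In the collinear case, which is the quasi-singular configuration, the two gradients are parallel, and only those $p$ with $p_1/h_1=\pm p_2/h_2$ (with the appropriate sign) can be reached.
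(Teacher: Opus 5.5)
Your proof is correct and follows the paper's argument: you reduce to the $2\times 2$ system $\nabla(\lambda_x|_{T_i})\cdot\vec{c}=p_i$ and show that the two gradients, being normal to $e_1$ and $e_2$, are linearly independent. Your version is slightly more careful on two points. You use the correct scaling $\vec{n}_i/h_i$, whereas the paper writes $-\vec{n}_i h_i$; that slip is harmless for its determinant argument. You also make explicit a step the paper leaves implicit: $e_1$ and $e_2$ share the endpoint of $e$ other than $x$, so ``not collinear'' really does imply ``not parallel.''
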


\begin{figure}[h]
    \centering
    \begin{subfigure}[b]{0.45\textwidth}
        \begin{tikzpicture}
            \node[circle, draw, fill=black, inner sep = 0pt] (A) at (-2,4){};
            \node[circle, draw, fill=black, inner sep = 0pt] (B) at (1,4){};
            \node[circle, draw, fill=black, inner sep = 0pt] (C) at (4,4){};
            \node[circle, draw, fill=black, inner sep = 0pt, label=below : {$x$}] (D) at (0,1){};

            \draw (A) -- (B) node[midway, above] {$e_1$};
            \draw (B) -- (C) node[midway, above]{$e_2$};
            \draw (A) -- (D);
            \draw (D) -- (C);
            \draw (D) -- (B) node[midway, right]{$e$};

            %The labels of T_1 and T_2 for the first complex.
            \node at (-.5,3){$T_1$};
            \node at (2,3){$T_2$};
        \end{tikzpicture}
    \end{subfigure}
    \hfill
    \begin{subfigure}[b]{0.45\textwidth}
        \begin{tikzpicture}
            \node[circle, draw, fill=black, inner sep = 0pt] (E) at (5,4){};
            \node[circle, draw, fill=black, inner sep = 0pt] (F) at (7,3){};
            \node[circle, draw, fill=black, inner sep = 0pt] (G) at (9,4){};
            \node[circle, draw, fill=black, inner sep = 0pt, label = below : {$x$}] (H) at (7,0){};

            \draw (E) -- (F) node[midway, above]{$e_1$};
            \draw (F) -- (G) node[midway, above]{$e_2$};
            \draw (E) -- (H);
            \draw (G) -- (H) ;
            \draw (H) -- (F) node[midway, right]{$e$};

            %These are the labels of both triangles for the complex on the right.
            \node at (6.5, 2) {$T_1$};
            \node at (7.5, 2.5) {$T_2$};
        \end{tikzpicture}
            
    \end{subfigure}
    
    \caption{The lemma does not allow the left triangulation}
    \label{fig:colinearedges}
\end{figure}

\begin{proof}
    Let 
    \begin{equation*}
        p=
        \begin{cases}
            \ p_1 & \text{on }T_1\\
            \ p_2 &\text{on }T_2
        \end{cases},
    \end{equation*}
 
    \begin{alignat*}{2}
        &h_1 = \text{dist}(x, l(e_1)) &\qquad\qquad 
        &h_2 = \text{dist}(x, l(e_2)),\\
        \intertext{ and}
        &\nabla \lambda_x|_{T_1} = -\vec{n_1}h_1 &\qquad\qquad &\nabla\lambda_x|_{T_2} = -\vec{n_2}h_2
    \end{alignat*}
    where $l(e_i)$ is the line on which $e_i$ lies on, $h_i$ is the shortest distance between $x$ and the line containing $e_i$, and $\vec{n_i}$ is the vector normal to the edge $e_i$. 
    Then we need 
    \begin{equation*}
        \vec{c}\cdot (-h_i\vec{n_i}) = p_i 
    \end{equation*}

    Using the notation $ \vec{c} = \begin{bmatrix} c_1 \\ c_2 \end{bmatrix}$ and $\vec{n_i} = \begin{bmatrix} n_{i1} \\ n_{i2} \end{bmatrix}$  and defining
     \begin{equation*}
        A: =
        \begin{bmatrix}
            h_1n_{11} & h_1n_{12}\\
            h_2n_{21} & h_2n_{22}
        \end{bmatrix}
    \end{equation*}
Then we see that we need to solve $-A\vec{c} =\vec{P}$ where $ \vec{P} = \begin{bmatrix} p_1 \\ p_2 \end{bmatrix}$. There is a solution as long as $A$ is invertible or equivalently if the determinant of $A$ does not vanish.  But the determinant is $h_1 h_2(n_{11} n_{22}-n_{12}n_{21})$. This is not zero as long as $n_1$ is not a multiple of $n_2$ which occurs when $e_1$ and $e_2$ do not lie in the same line.  
\end{proof}
Similarly, we can prove a more simple result. 
\begin{lemma}\label{lemma102}
Consider a vertex $x$ and suppose that $x \in \Delta_0(T)$ for a triangle $T$. If $p$ is constant on $T$ there exists a constant vector $\vec{c}$ such that 
\begin{equation*}
    \dive (\vec{c} \lambda_x)= p  \quad \text{ on }  T.
\end{equation*} 
\end{lemma}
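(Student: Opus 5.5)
The plan is to mimic the proof of Lemma \ref{lemma101}, which simplifies because only one triangle is involved. Since $\vec{c}$ is a constant vector and $\lambda_x$ is affine on $T$, the product rule gives
\begin{equation*}
    \dive(\vec{c}\lambda_x) = \vec{c}\cdot \nabla \lambda_x \quad \text{on } T .
\end{equation*}
So the problem reduces to choosing $\vec{c}$ with $\vec{c}\cdot\nabla\lambda_x|_T = p$, where $p$ is the given constant.

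First, I will compute $\nabla\lambda_x|_T$. Let $e$ be the edge of $T$ opposite to $x$, let $h = \text{dist}(x, l(e))$, and let $\vec{n}$ be the unit outward normal to $e$. Then $\lambda_x$ vanishes on $e$ and equals $1$ at $x$. Hence $\nabla \lambda_x|_T = -\vec{n}/h$; this is the same formula used in Lemma \ref{lemma101}, up to how the factor $h$ is written. The only fact that matters is that this gradient is a nonzero constant vector, since $h>0$ for a nondegenerate triangle.

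Second, I will take
\begin{equation*}
    \vec{c} = \frac{p}{|\nabla\lambda_x|_T|^2}\,\nabla\lambda_x|_T = -p\,h\,\vec{n}.
\end{equation*}
Then $\vec{c}\cdot\nabla\lambda_x|_T = p$, which gives $\dive(\vec{c}\lambda_x)=p$ on $T$.

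There is no real obstacle. The one point to check is that $\nabla\lambda_x|_T \neq 0$, which holds because $T$ is nondegenerate. In contrast to Lemma \ref{lemma101}, we must solve a single scalar equation in two unknowns rather than a $2\times 2$ system, so no non-collinearity hypothesis is needed.
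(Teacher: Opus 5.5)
Your proof is correct and is exactly the argument the paper intends when it says this lemma is proved ``similarly'' to Lemma~\ref{lemma101}. You reduce to the scalar equation $\vec{c}\cdot\nabla\lambda_x|_T = p$ and solve it using only that $\nabla\lambda_x|_T$ is a nonzero constant vector. Your scaling $\nabla\lambda_x|_T = -\vec{n}/h$ is in fact the correct one; the paper writes $-\vec{n}h$, but only nonvanishing of the gradient matters here.
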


Now we can relate strong collapsiblity to Strang's conjecture.
\begin{theorem}Let $\Th$ be triangulation of a two dimensional domain $\Omega$. Let $K$ be its associated simplicial complex. \\

1) Suppose that $K$ is strongly collapsible.  \\

2) (non-collinear condition) In addition, suppose for each $x_i$ (in Definition \ref{strong})  that is a vertex of two triangles  $T_1,T_2 \in K_i$ then the edges of $T_1, T_2$  opposite to $x_i$ do not lie on the same line. \\

Then, Strang's Conjecture holds on $\Th$ in the case $k=2$ (i.e $\dim\sk=\mathcal{S}_h^k$). 
\end{theorem}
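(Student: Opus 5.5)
By the equivalence theorem of Section \ref{sec:diff_complex}, it suffices to show that $\text{Range}(\dive, \vone) = \dzero$, since for $k=2$ we have $\vk = [V_h^1]^2$ and $\dk = D_h^0$ is the space of piecewise constants. We will also need that $\Th$ has no singular vertices, so that the equivalence theorem applies. A singular vertex has four edges on two lines through it, hence lies in four triangles, and it remains interior until one of its neighbors is removed. I expect such a vertex either to be excluded by hypothesis 2 at the moment it becomes $x_i$, or to need a separate remark; I would first check whether the equivalence theorem can be used as stated, possibly after adding a note about $\sigma$. Modulo this point, the plan is to prove surjectivity of $\dive: [V_h^1(K_i)]^2 \to D_h^0(K_i)$ by backward induction along the strong collapse $K = K_0 \supseteq K_1 \supseteq \cdots \supseteq K_N$.

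\textbf{Base case.} $K_N$ is a single triangle $T$. Given a constant $p$ on $T$, Lemma \ref{lemma102} applied to any vertex $x$ of $T$ yields $\vec{c}$ with $\dive(\vec{c}\lambda_x) = p$. This places $\vec{c}\lambda_x$ in $[V_h^1(K_N)]^2$ and shows surjectivity there.

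\textbf{Inductive step.} Assume surjectivity holds on $K_{i+1}$, and let $p \in D_h^0(K_i)$. The triangles of $K_i$ not in $K_{i+1}$ are exactly the one or two triangles of $\mathfrak{Z}(x_i,K_i)$. First apply the induction hypothesis to $p|_{K_{i+1}}$ to get $w \in [V_h^1(K_{i+1})]^2$ with $\dive w = p$ on $K_{i+1}$. Extend $w$ to $K_i$ by keeping its nodal values at all vertices of $K_{i+1}$ and setting its value at $x_i$ to zero. Since every vertex of the removed triangles other than $x_i$ is a vertex of $K_i$, and the vertices of $K_{i+1}$ are those of $K_i$ except $x_i$ and possibly some vertices removed with $\beta$, this defines a continuous piecewise linear $\tilde w$ on $K_i$ agreeing with $w$ on $K_{i+1}$. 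Any vertex removed in $\beta$ belongs only to the removed triangles, so we also set its value to zero. Then $q := p - \dive \tilde w$ is piecewise constant, vanishes on $K_{i+1}$, and is supported on the one or two triangles containing $x_i$. Lemma \ref{lemma102} (one triangle) or Lemma \ref{lemma101} (two triangles sharing an edge, using the non-collinear condition 2) gives $\vec{c}$ with $\dive(\vec{c}\lambda_{x_i}) = q$ on $\mathfrak{Z}(x_i,K_i)$. Because $x_i$ is not a vertex of $K_{i+1}$, the hat function $\lambda_{x_i}$ vanishes on every triangle of $K_{i+1}$, so $v = \tilde w + \vec{c}\lambda_{x_i}$ satisfies $\dive v = p$ on all of $K_i$.

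The main obstacle is justifying that $\lambda_{x_i}$ is supported only on the removed triangles and that the extension $\tilde w$ is well defined and continuous across the shared edges. Both rest on the structural facts that $x_i \notin \Delta_0(K_{i+1})$ and that $\beta$ deletes only simplices not shared with surviving triangles. These facts follow from the definition of $\beta$, and that is where I would spend the care. Once they are in place, induction down to $K_0 = K$ gives $\text{Range}(\dive,\vone) = \dzero$, and the equivalence theorem concludes $\dim S_h^2 = \mathcal{S}_h^2$.
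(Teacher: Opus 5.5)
Your backward induction is the paper's argument. At each step the residual $p-\dive\vec v_{i+1}$ is supported on the one or two triangles around $x_i$, and it is corrected by $\vec c_i\lambda_{x_i}$ using Lemma~\ref{lemma102} or Lemma~\ref{lemma101}; the correction does not disturb $\Re(K_{i+1})$. The only difference is cosmetic. The paper keeps every $\vec v_{i+1}$ in the global space $\vone$ on $\Th$ and only records that $\dive\vec v_{i+1}=p$ on $\Re(K_{i+1})$, so the extension step you single out as the main obstacle never arises. The global hat function $\lambda_{x_i}$ is nonzero only on triangles removed at step $i$ or earlier, and those earlier ones are fixed later in the backward sweep.

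The singular-vertex point you left open is not an obstruction; the paper does not address it either. You should still close it rather than argue modulo it, and there are two ways to do so.

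\textbf{First way (simplest).} The computation in Lemma~\ref{SwithVandD} uses only the Euler-type counting identities, never the absence of singular vertices. Exactness together with $\ker(\rot,S_h^2)=$ constants gives $\dim S_h^2=\dim\vone-\dim\text{Range}(\dive,\vone)+1$ on any such mesh. So surjectivity yields $\dim S_h^2=\mathcal S_h^2$ directly, with no appeal to the hypothesis of the equivalence theorem.

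\textbf{Second way.} The hypotheses force $\sigma=0$, but not in the way you guessed: a singular vertex $z$ can never be $x_i$ while it still lies in four triangles. Instead, consider the first step $i$ at which a triangle around $z$ is removed.
\begin{enumerate}
\item Then $x_i$ is one of the four neighbours of $z$ on the two lines.
\item That neighbour lies in exactly two of the four triangles around $z$.
\item Its opposite edges in those two triangles are the two segments from $z$ along the other line, which violates condition 2.
\end{enumerate}
This exclusion is genuinely needed. At a singular vertex, the alternating sum of the four constant values of $\dive\vec v$ around $z$ vanishes for every $\vec v\in\vone$, so surjectivity could not hold there.
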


\begin{proof}
    For every $p\in\dzero$ we will show that there exists a $\vec{v} \in \vone$ such that $\dive \vec{v}=p$.
     Since $K$ is strongly collapsible to a triangle (see fig. 2), we know that there exist a sequence of vertices $x_0,x_1,\dots,x_{N}$ that correspond with each complex $K_0,K_1,\dots, K_N$. 

     Suppose that we have constructed a $\vec{v}_{i+1} \in \vone$ such that 
     \begin{equation*}
         \dive \vec{v}_{i+1}= p \qquad \text{ on } \Re(K_{i+1}).
     \end{equation*}
     Here we used the notation 
     \begin{alignat*}{1}
          \Re(K_{i+1})=\bigcup_{T \in \Delta_2(K_{i+1})} T
     \end{alignat*}

     Let $p_i=  p- \dive \vec{v}_{i+1} \in \dzero$ and we note that $p_i$ vanishes on $\Re(K_{i+1})$. Let $B_i= \Re(K_{i}) \backslash \Re(K_{i+1})$ and by our hypothesis $B_i$ is the region with either one triangle or two triangles and of course the vertex $x_i$.   Let $\vec{w}_i= \vec{c}_i \lambda_{x_i}$  for some constant vector $\vec{c}_i$ which we choose in a moment. We note that $\vec{w}_i$ vanishes on $\Re(K_{i+1})$. By Lemmas \ref{lemma101}, \ref{lemma102} we have 
    \begin{equation*}
         \dive \vec{w}_i = p_i \qquad \text{ on } B_i.
     \end{equation*}

     We define $\vec{v}_i \in \vone$ by
     \begin{alignat*}{1}
      \vec{v}_i:=\vec{w}_i+ \vec{v}_{i+1}.   
     \end{alignat*}

     We have
     \begin{alignat*}{2}
         \dive \vec{v}_i=& \dive \vec{w}_i+  \dive \vec{v}_{i+1} \quad &&  \\
         =& p_i + \dive \vec{v}_{i+1} \quad && \\
         =& p  \qquad &&  \text{on }  B_i.
     \end{alignat*}
     On the other hand,
     \begin{alignat*}{2}
         \dive \vec{v}_i=& \dive \vec{w}_i+  \dive \vec{v}_{i+1} \quad &&  \\
         =& \dive \vec{v}_{i+1} \quad &&  \\
         =& p \qquad && \text{on }  \Re(K_{i+1})
     \end{alignat*}
     Hence, we have shown 

     \begin{equation*}
         \dive \vec{v}_{i}= p \qquad \text{ on } \Re(K_{i}).
     \end{equation*}
     If we continue this process to produce $\vec{v}_N, \vec{v}_{N-1}, \ldots, \vec{v}_0$ then we let $\vec{v}:=\vec{v}_0$ is the desired vector field.
    \end{proof}

\section{Acknowledgements}
 We would like to thank the entire MSRI-UP 2026 staff. We cannot say thanks enough to the faculty members Alexander Diaz-Lopez, Johnny Guzm\'an and Maurice Fabien. We extend our deepest and sincere appreciation to our graduate mentors Parneet Gill and Elizabeth Rubio for their moral and academic support throughout the research.  Many thanks to our primary research mentor Johnny Guzm\'an. We would also like to give special thanks to Maurice Fabien for collaborating with us to write the code to check Strang's conjecture on a given mesh, which is available here: \hyperlink{https://github.com/armro/CheckStrangsConjecture}{https://github.com/armro/CheckStrangsConjecture}. 

 This research was conducted during the MSRI-UP 2026 summer program. We would like to thank SL Math for their hospitality.

\newpage
\bibliographystyle{abbrv}
\bibliography{./references}

\end{document}